\newtheorem{problem}{Problem}[section]
\newcommand{\Card}{{\rm Card}}
\newcommand{\eps}{\varepsilon}
\newcommand{\bs}{\mathbf s}
\title
\author{\firstname{Yann} \lastname{Bugeaud}}
\address{Universit\'e de Strasbourg, CNRS\\ 
IRMA, UMR 7501\\
7 rue Ren\'e Descartes\\
67084 Strasbourg, France}
\email{bugeaud@math.unistra.fr}
\author{\firstname{Dong Han} \lastname{Kim}}
\address{Dongguk University -- Seoul\\ 
Department of Mathematics Education\\
30 Pildong-ro 1-gil, Jung-gu\\
Seoul 04620, Korea}
\email{kim2010@dongguk.edu}
\thanks{Supported by the National Research Foundation of Korea (NRF-2015R1A2A2A01007090).}
\keywords{Combinatorics on words, Sturmian word, complexity, 
integer base expansion, continued fraction}
\subjclass[2010]{11A63 (primary); 68R15 (secondary)}
\begin{document}
%% Abstract 
\begin{abstract}
Let $r$ and $s$ be multiplicatively independent positive integers.   
We establish that 
the $r$-ary expansion and the $s$-ary expansion 
of an irrational real number, viewed as infinite words on $\{0, 1, \ldots , r-1\}$
and $\{0, 1, \ldots , s-1\}$, respectively, cannot have simultaneously a low block complexity. 
In particular, they cannot be both Sturmian words.   
\end{abstract}

%% French abstract
\begin{altabstract}
Soient $r$ et $s$ deux entiers strictement positifs multiplicativement ind\'ependants.  
Nous d\'emontrons que les d\'eveloppements en base $r$ et en base $s$ d'un nombre 
irrationnel, vus comme des mots infinis sur les alphabets $\{0, 1, \ldots , r-1\}$
et $\{0, 1, \ldots , s-1\}$, respectivement, ne peuvent pas avoir simultan\'ement une 
trop faible complexit\'e par blocs. 
En particulier, au plus l'un d'eux est un mot sturmien.  
\end{altabstract}

\maketitle

\section{Introduction}

Throughout this paper, $\lfloor x \rfloor$ denotes the greatest
integer less than or equal to $x$ 
and $\lceil x \rceil$
denotes the smallest integer greater than or equal to $x$.
Let $b \ge 2$ be an integer.
For a real number $\xi$, write
$$
\xi = \lfloor \xi \rfloor + \sum_{k \ge 1} \, {a_k \over b^k} = 
\lfloor \xi \rfloor + 0.a_1 a_2 \ldots ,
$$
where each digit $a_k$ is an integer from $\{0, 1, \ldots , b-1\}$ and
infinitely many digits $a_k$ are not equal to $b-1$.  
The sequence ${\bf a} := (a_k)_{k \ge 1}$ is uniquely determined by
the fractional part of $\xi$.
With a slight abuse of notation, we call it 
the $b$-ary expansion of $\xi$ and we view it also as the infinite word 
${\bf a} = a_1 a_2 \ldots $ over the alphabet $\{0, 1, \ldots , b-1\}$. 

For an infinite word ${\bf x} = x_1 x_2 \ldots $ over a finite alphabet 
and for a positive integer $n$, set
$$
p(n, {\bf x}) = \Card\{ x_{j+1} \ldots x_{j+n} : j \ge 0\}. 
$$
This notion from combinatorics on words is now commonly used to
measure the complexity of the $b$-ary expansion of a real number $\xi$. 
Indeed, for a positive integer $n$, we denote by 
$p(n, \xi, b)$ the total number of distinct blocks of $n$ digits
in the $b$-ary expansion ${\bf a}$ of $\xi$, that is,
$$
p(n, \xi, b) := p(n, {\bf a}) = \Card\{ a_{j+1} \ldots  a_{j+n} : j \ge 0\}. 
$$
Obviously, we have
$
1 \le p(n, \xi, b) \le b^n,
$
and both inequalities are sharp. In particular, the entropy of $\xi$
to base $b$, denoted by $E(\xi, b)$ and defined by
(note that the sequence $(\log p(n, \xi, b))_{n \ge 1}$ is sub-additive,  
thus the limit below exists)
$$
E(\xi, b) := \lim_{n \to + \infty} \, \frac{\log p(n, \xi, b)}{n},
$$
satisfies
$$
0 \le E(\xi, b) \le \log b.
$$
If $\xi$ is rational, then its $b$-ary expansion is
ultimately periodic and the numbers $p(n, \xi, b)$, $n \ge 1$, 
are uniformly bounded by a constant depending only 
on $\xi$ and $b$. 
If $\xi$ is irrational, then, 
by a classical result of Morse and Hedlund \cite{MoHe},
we know that $p(n, \xi, b)\ge n+1$ for
every positive integer $n$, and this inequality is sharp.

\begin{defi} 
A Sturmian word $\mathbf x$ is an infinite word 
which satisfies
$$
p(n,{\mathbf x}) = n + 1, \quad \hbox{for $n \ge 1$}.
$$
A quasi-Sturmian word $\mathbf x$ is an infinite word 
which satisfies
$$
p(n,{\mathbf x}) = n + k, \quad \hbox{for $n \ge n_0$},
$$
for some positive integers $k$ and $n_0$. 
\end{defi}

There exist uncountably many Sturmian words over $\{0, 1\}$; see, e.g., \cite{AlSh03}.

The following rather general problem was investigated in \cite{Bu12}. 
Recall that two positive integers $x$ and $y$ are called 
{\it multiplicatively independent} if 
the only pair of integers $(m, n)$ such that
$x^m y^n = 1$ is the pair $(0, 0)$.

\setcounter{problem}{1}
\begin{problem}
Are there irrational real numbers having a `simple'
expansion in two multiplicatively independent bases?
\end{problem}
\setcounter{cdrthm}{2}

Among other results, Theorem 2.3 of \cite{Bu12} asserts that any 
irrational real number cannot have simultaneously 
too many zeros in its $r$-ary expansion and in its $s$-ary expansion
when $r$ and $s$ are multiplicatively independent positive integers. 
Furthermore, by Theorem 2.1 of \cite{Bu12}, there are irrational numbers 
having maximal entropy in no base. More precisely, for any real number $\eps > 0$
and any integer $b_0 \ge 2$, there exist uncountably many real numbers $\xi$ such that
$$
E(\xi, b_0) < \eps \quad \hbox{and} \quad
E(\xi, b) < \log b, \quad \hbox{for $b \ge 2$}.
$$
However, 
we still do not know whether there exist irrational real numbers 
having zero entropy in two multiplicatively independent bases. 
The main purpose of the present work is to make a small step towards the resolution
of this problem, by establishing that the complexity function of the 
$r$-ary expansion of an irrational real number and that of its $s$-ary expansion 
cannot both grow too slowly when $r$ and $s$ 
are multiplicatively independent positive integers. 

\begin{theorem}\label{twobases}
Let $r$ and $s$ be multiplicatively independent positive integers.
Any irrational real number $\xi$ satisfies
$$
\lim_{n \to + \infty} \, \bigl( p(n, \xi, r) + p(n, \xi , s) -  2n \bigr) = + \infty.
$$
Said differently, $\xi$ cannot have simultaneously a quasi-Sturmian 
$r$-ary expansion and a quasi-Sturmian $s$-ary expansion. 
\end{theorem}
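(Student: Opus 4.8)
The plan is to argue by contradiction, and the first step is to observe that proving the limit is $+\infty$ is equivalent to the \emph{a priori} weaker assertion that $\xi$ cannot have two quasi-Sturmian expansions. Indeed, since $\xi$ is irrational, neither its $r$-ary nor its $s$-ary expansion is eventually periodic, so by the Morse--Hedlund theorem a single equality $p(n+1,\xi,b)=p(n,\xi,b)$ would force eventual periodicity; hence both $n\mapsto p(n,\xi,r)-n$ and $n\mapsto p(n,\xi,s)-n$ are non-decreasing. Consequently $g(n):=p(n,\xi,r)+p(n,\xi,s)-2n$ is non-decreasing, its limit exists in $(0,+\infty]$, and it equals $+\infty$ unless both differences are eventually constant, that is, unless both expansions are quasi-Sturmian. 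So from now on I would assume that the $r$-ary and $s$-ary expansions of $\xi$ are both quasi-Sturmian and seek a contradiction.

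The second step turns quasi-Sturmianity into Diophantine information. By Cassaigne's structure theorem a quasi-Sturmian word can be written as $W\,\sigma(\mathbf c)$ with $W$ a finite word, $\mathbf c$ a Sturmian word and $\sigma$ a non-erasing morphism; and every Sturmian word contains, for each $t$, a square of period $Q_t$, the $t$-th denominator of the continued fraction of its slope. Applying $\sigma$ produces, for infinitely many $t$, a square $ZZ$ inside the $r$-ary expansion of $\xi$ whose period $v_t:=|\sigma(Z)|$ tends to infinity; say it starts at position $u_t\ge 0$. A block of period $v_t$ and length $2v_t$ beginning at position $u_t$ means precisely that the fractional parts of $r^{u_t}\xi$ and $r^{u_t+v_t}\xi$ agree on their first $v_t$ digits, i.e.
\[
\bigl\| r^{u_t}(r^{v_t}-1)\,\xi \bigr\| \le r^{1-v_t},
\]
where $\|\cdot\|$ denotes the distance to the nearest integer. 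The same reasoning in base $s$ gives infinitely many pairs $(u'_\ell,v'_\ell)$ with $v'_\ell\to+\infty$ and $\bigl\| s^{u'_\ell}(s^{v'_\ell}-1)\,\xi\bigr\|\le s^{1-v'_\ell}$. Each such inequality is an unusually good rational approximation to $\xi$ whose denominator, $r^{u_t}(r^{v_t}-1)$ or $s^{u'_\ell}(s^{v'_\ell}-1)$, is built from the base.

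The third and hardest step is to extract a contradiction from these two families, and this is where the multiplicative independence of $r$ and $s$ must enter. I would first note why nothing elementary suffices: matching a base-$r$ and a base-$s$ approximation of comparable size forces them to coincide only when the offsets $u_t,u'_\ell$ are bounded, which holds for a characteristic Sturmian word but not in general, and here the offsets may grow with $v_t$. To absorb the offsets I would not compare approximations two at a time but feed the whole system into the Schmidt Subspace Theorem in its $p$-adic, several-variable form, with the set $S$ of places consisting of the Archimedean place together with the primes dividing $rs$. Eliminating $\xi$ between a base-$r$ and a base-$s$ inequality at comparable scales turns them into the near-vanishing of an integer linear form whose coefficients are the differences of $S$-units $r^{u_t+v_t}-r^{u_t}$ and $s^{u'_\ell+v'_\ell}-s^{u'_\ell}$; the theorem then confines the associated integer points to finitely many proper subspaces. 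Each such subspace imposes a fixed linear relation between powers of $r$, powers of $s$ and $\xi$, and letting the exponents $v_t,v'_\ell$ tend to infinity while invoking the multiplicative independence of $r$ and $s$ should collapse any such relation into an identity forcing $\xi$ to be rational, contradicting our hypothesis. The genuine obstacle is precisely this last piece of bookkeeping: selecting matching scales and the exact linear forms so that the product of the forms over $S$ is small enough to trigger the Subspace Theorem while the integer points keep varying, and then verifying that every exceptional subspace is ruled out by multiplicative independence and by the irrationality of $\xi$.
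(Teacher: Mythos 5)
Your Step 1 (the reduction of the limit statement to ``$\xi$ cannot have two quasi-Sturmian expansions'' via Morse--Hedlund and monotonicity of $n \mapsto p(n,\xi,b)-n$) is correct and is exactly the equivalence the paper uses implicitly. But Step 2 already contains a flaw that you flag and never repair: the squares you extract from Cassaigne's theorem plus the Sturmian structure sit at offsets $u_t$ that may grow much faster than the period $v_t$, and then $\| r^{u_t}(r^{v_t}-1)\xi\| \le r^{1-v_t}$ is \emph{not} an unusually good approximation --- the denominator is about $r^{u_t+v_t}$ while the error of the induced rational is about $r^{1-u_t-2v_t}$, which is worse than $q^{-2}$ as soon as $u_t \ge 1$. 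The paper avoids this by working not with squares of images of standard words but with the return-time function $r(n,\mathbf x)$ of \cite{BuKim15a}: for a quasi-Sturmian word one has $\liminf_n r(n,\mathbf x)/n$ small enough that repetitions occur as \emph{prefixes} $W(UV)^{t+1}U$ with $|W|+|UV|$ at most roughly half the matched length, so the offset is automatically controlled and one genuinely obtains $|\xi - p/q| < q^{-5/2}$ with $q = b^{w}(b^{u+v}-1)$.

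The decisive gap, however, is Step 3. You end up with two \emph{separate} families of special-denominator approximations and no mechanism forcing them to interact: eliminating $\xi$ between one member of each family only yields information when the scales match (so that $q_2\epsilon_1 + q_1\epsilon_2 < 1$, forcing the two rationals to coincide), and nothing guarantees the two families contain members at comparable scales, since the denominators within each family can grow with arbitrarily large gaps; moreover the numerators $p_1, p_2$ are not $S$-units, so your proposed linear-forms setup for a $p$-adic Subspace Theorem is not well posed as stated --- you concede this is ``the genuine obstacle.'' The idea your proposal is missing is the second assertion of the paper's Theorem \ref{contfracst}, a \emph{converse} statement: if the base-$b$ expansion of $\xi$ is quasi-Sturmian, there is $M = M(\xi,b)$ such that \emph{every} reduced $\frac{p}{q}$ with $|\xi - \frac{p}{q}| < \frac{1}{Mq^2}$ and $q$ large satisfies $q = \frac{b^a(b^c-1)}{m}$ with $1 \le m \le M$. (Its proof is the technical heart of the paper: a fine analysis of $r(n,\mathbf x)$ showing the partial quotients between consecutive ``special'' convergents are bounded.) Granting this, the exponent-$5/2$ approximations produced from the base-$r$ expansion are automatically of base-$s$ special form as well, yielding infinitely many coincidences $\frac{r^{u_1}(r^{v_1}-1)}{m_1} = \frac{s^{u_2}(s^{v_2}-1)}{m_2}$ with $m_1, m_2$ fixed --- precisely the scale-matching you could not manufacture. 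Only then does the Subspace Theorem enter, and in packaged form: the Evertse--Schlickewei--Schmidt $S$-unit theorem applied to $\frac{m_2}{m_1}r^{z_1}s^{-z_4} - \frac{m_2}{m_1}r^{z_2}s^{-z_4} + s^{z_3} = 1$ gives finitely many solutions (degenerate ones being excluded by multiplicative independence), a contradiction. Without an analogue of this ``all good approximations have special denominators'' statement, your argument stalls exactly where you say it does.
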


Theorem \ref{twobases} answers Problem 3 of \cite{Bu12} and gives the first
contribution to Problem 10.21 of \cite{BuLiv2}. 

At the heart of the proof of Theorem \ref{twobases} lies the rather surprising fact that 
we can obtain very precise information on the continued fraction expansion 
of any real number whose expansion in some integer base is 
given by a quasi-Sturmian word;  
see Theorem \ref{contfracst} below. This fact was already used in the proof of
Theorem 4.5 of \cite{BuKim15a}. 
The proof of Theorem \ref{twobases} also depends on the $S$-unit theorem, 
whose proof ultimately rests on the Schmidt Subspace Theorem.

We complement Theorem~\ref{twobases} by the following statement 
addressing expansions of a real number in two multiplicatively dependent bases.

\begin{theorem}\label{twobasesdepter}    
Let $r, s \ge 2$ be multiplicatively dependent integers and $m, \ell$ be the smallest 
positive integers such that $r^m = s^\ell$.
Then, there exist uncountably many real numbers $\xi$ satisfying  
$$
\lim_{n \to + \infty} \, \bigl( p(n, \xi, r) + p(n, \xi , s) -  2n \bigr) = m + \ell  
$$ 
and every irrational real number $\xi$ satisfies   
$$
\lim_{n \to + \infty} \, \bigl( p(n, \xi, r) + p(n, \xi , s) -  2n \bigr) \ge m + \ell.  
$$ 
\end{theorem}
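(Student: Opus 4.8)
The plan is to reduce both expansions to a common finer base and to split the statement into a matching upper bound (the construction) and a lower bound. Since $r$ and $s$ are multiplicatively dependent, I would write $r = g^{a}$ and $s = g^{b}$, where $g \ge 2$ is maximal and $\gcd(a,b) = 1$; then $t := r^{m} = s^{\ell} = g^{ab}$ with $m = b$ and $\ell = a$, so that $m + \ell = a + b$. With this notation the base-$g$ expansion $\mathbf{w}$ of $\xi$ refines everything in sight: the base-$r$ word is the $a$-grouping of $\mathbf{w}$ (read $a$ consecutive digits at a time), the base-$s$ word is its $b$-grouping, and the base-$t$ word $\mathbf{c}$ is the $ab$-grouping. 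I would first record that for irrational $\xi$ these words are aperiodic, so by Morse--Hedlund $p(n+1,\xi,b) \ge p(n,\xi,b)+1$ for $b \in \{r,s\}$; hence each excess $e_r(n) := p(n,\xi,r)-n$ and $e_s(n) := p(n,\xi,s)-n$ is non-decreasing, the limit in the statement exists in $\mathbb{N}\cup\{+\infty\}$, and it suffices to control the eventual values $e_r^{\infty}$ and $e_s^{\infty}$.

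For the first assertion I would produce the examples by prescribing the base-$t$ expansion. Take $\mathbf{c}$ to be any Sturmian word over the two digits $\{0,1\}\subset\{0,\dots,t-1\}$; there are uncountably many of these. Because the integers $0$ and $1$ have base-$r$ representations $0^{m}$ and $0^{m-1}1$, the base-$r$ word is the image of $\mathbf{c}$ under the non-erasing morphism $0\mapsto 0^{m}$, $1\mapsto 0^{m-1}1$, whose two images differ in a single position; similarly the base-$s$ word is the image under $0\mapsto 0^{\ell}$, $1\mapsto 0^{\ell-1}1$. Such images of a Sturmian word are quasi-Sturmian, so both excesses are finite, and a direct count of the factors of length $n$ sorted by their phase modulo $m$ (resp.\ $\ell$)---the only genuinely new factors beyond the aligned ones being those that straddle block boundaries---gives $p(n,\xi,r)\le n+m$ and $p(n,\xi,s)\le n+\ell$ for large $n$. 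Thus $e_r^{\infty}\le m$ and $e_s^{\infty}\le \ell$, and combined with the lower bound below this forces equality and the announced limit $m+\ell$.

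The core of the theorem is the lower bound $e_r^{\infty}+e_s^{\infty}\ge m+\ell$, valid for every irrational $\xi$; here the obstruction is purely combinatorial rather than arithmetic, so the Subspace Theorem input of Theorem~\ref{twobases} plays no role. I would express $p(n,\xi,r)$ exactly through the common word $\mathbf{c}$: a length-$n$ factor of the base-$r$ word is a window over $\mathbf{c}$ opened with one of the $m$ phases and with possibly truncated first and last blocks, so $p(n,\xi,r)$ equals a sum over phases of truncated factor counts of $\mathbf{c}$, corrected for those factors that occur at several phases. The excess $e_r(n)$ is produced precisely by the boundary truncations and by these multi-phase coincidences; the same analysis applies to the base-$s$ word with $\ell$ in place of $m$. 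The decisive point is that, reading everything inside the base-$g$ word $\mathbf{w}$ with $\gcd(a,b)=1$, the phase of a base-$r$ window modulo $a$ and that of a base-$s$ window modulo $b$ are, by the Chinese remainder theorem, independent; consequently the economies that keep $e_r$ small (few multi-phase factors for the $a$-grouping) and those that keep $e_s$ small cannot be realised simultaneously, and a bookkeeping of the right-special factors of the two words yields the total excess $\ge a+b = m+\ell$.

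The step I expect to be the main obstacle is exactly this coupled lower bound: turning the coprimality of $a$ and $b$ into the precise additive constant. Controlling a single grouping is already delicate, since one must track which factors appear at several phases and at the block boundaries; handling the two groupings together---and in particular ruling out that cancellations push $e_r^{\infty}+e_s^{\infty}$ below $m+\ell$---requires a uniform description of the special factors of $\mathbf{c}$ and of how they lift to the two expansions. I would try to isolate this as a self-contained lemma on the factor complexity of the $a$- and $b$-groupings of an arbitrary aperiodic word, stated so that the construction of the second paragraph realises its equality case.
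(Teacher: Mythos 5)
Your reduction and your construction are essentially sound, and in fact the paper itself gives no internal proof of this theorem to compare against: it explicitly defers the argument to \cite{BuKim16}, describing it as ``rather tedious combinatorial constructions''. Writing $r=g^a$, $s=g^b$ with $g$ maximal does force $\gcd(a,b)=1$, $m=b$, $\ell=a$; prescribing the base-$t$ expansion ($t=r^m=s^\ell$) to be a Sturmian word over $\{0,1\}$ makes the base-$r$ and base-$s$ expansions the images of that word under the morphisms $0\mapsto 0^{m}$, $1\mapsto 0^{m-1}1$ and $0\mapsto 0^{\ell}$, $1\mapsto 0^{\ell-1}1$, which are quasi-Sturmian by Lemma~\ref{Cas}; and since the two image blocks differ only in their last letter, the phase-by-phase count $\sum_{p=0}^{m-1}\bigl(\lfloor (n+p)/m\rfloor+1\bigr)=n+m$ does give $p(n,\xi,r)\le n+m$ and likewise $p(n,\xi,s)\le n+\ell$, hence the upper bound $m+\ell$ for these uncountably many $\xi$.

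The genuine gap is the second assertion, the lower bound for \emph{every} irrational $\xi$ --- and you need it twice, because your first assertion also leans on it to convert the inequality $\le m+\ell$ into the exact limit (otherwise you would have to rule out cross-phase coincidences in your construction by hand). Your sentence ``a bookkeeping of the right-special factors of the two words yields the total excess $\ge a+b$'' is not an argument; it \emph{is} the theorem. The Chinese remainder observation does not substitute for it: coprimality of $a$ and $b$ makes the starting phases of the two kinds of windows independent as residues, but whether a factor of the base-$g$ word occurs at several phases of a grouping is governed by its set of occurrence positions (its return words), not by congruence conditions on indices, and nothing in your outline excludes that the coincidences which keep $e_r$ small and those which keep $e_s$ small occur simultaneously. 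Even the simplest instance ($a=1$, $b=2$: if the base-$g$ word is Sturmian, then its $2$-grouping has excess at least $2$) already requires a real analysis of return words of Sturmian factors, which you do not supply. The self-contained lemma you propose to isolate is precisely the content of the cited paper \cite{BuKim16}, so as it stands the core of the statement remains unproved.
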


The proof of Theorem  \ref{twobasesdepter} consists in rather tedious combinatorial   
constructions and is given in \cite{BuKim16}.  

Our paper is organized as follows. 
Section 2 gathers auxiliary results on Sturmian and quasi-Sturmian words.
Theorem \ref{twobases} is proved in Section 3.

%%%%%%%%%%%%%%%%%%%%%%%%%%%%%%%%%%%%%%%%%%%%%%

\section{Auxiliary results}

Our proof makes use of the complexity function studied in \cite{BuKim15a}, 
which involves the smallest return time of a factor of an infinite word. 
For an infinite word ${\mathbf x}= x_1 x_2 \dots $ set 
$$ 
r(n,{\mathbf x}) = \min \{ m \ge 1 :   
x_{i}\dots x_{i+n-1} = x_{m-n+1} \dots x_{m} \text{ for some } 1 \le i \le m-n \}.    
$$
Said differently, $r(n,{\mathbf x})$ denotes the length of the smallest prefix of $\mathbf x$
containing two (possibly overlapping) occurrences of some word of length $n$.

The next lemma gathers several properties of the function $n \mapsto r(n,{\mathbf x})$ 
established in \cite{BuKim15a}.

\begin{lemma}\label{ubound}
Let $\mathbf x$ be an infinite word. \\   
(1) For any positive integer $n$, we have 
$$
r(n+1,{\mathbf x}) \ge r(n,{\mathbf x}) +1 \quad \hbox{and} \quad 
r(n,{\mathbf x}) \le p(n,{\mathbf x}) + n.
$$ 
(2) The word  $\mathbf x$ is ultimately periodic 
if and only if $r(n+1, {\bf x}) = r(n, {\bf x}) + 1$ for every sufficiently large $n$. \\  
(3) If the positive integer $n$ satisfies $r(n+1,{\mathbf x}) \ge r(n,{\mathbf x}) + 2$, 
then $r(n+1,{\mathbf x}) \ge 2n+3.$ \\  
\end{lemma}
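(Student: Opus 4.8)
The plan is to treat the three assertions in turn, building part~(2) on part~(1) and reserving the genuinely combinatorial work for part~(3).

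For part~(1), both inequalities come straight from the definition. For $r(n+1,\mathbf{x}) \ge r(n,\mathbf{x}) + 1$, I set $m = r(n+1,\mathbf{x})$ and look at the two occurrences of a block of length $n+1$ inside $x_1 \ldots x_m$, one of them being the suffix $x_{m-n}\ldots x_m$. Deleting the last letter of each occurrence produces two occurrences of a block of length $n$ inside the prefix $x_1 \ldots x_{m-1}$, whence $r(n,\mathbf{x}) \le m-1$. For $r(n,\mathbf{x}) \le p(n,\mathbf{x}) + n$, I consider the $p(n,\mathbf{x}) + 1$ factors of length $n$ starting at positions $1, \ldots, p(n,\mathbf{x})+1$; all of them lie in the prefix of length $p(n,\mathbf{x})+n$, and since there are only $p(n,\mathbf{x})$ distinct factors of length $n$, two of them coincide by the pigeonhole principle. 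Taking the earliest second occurrence bounds $r(n,\mathbf{x})$ as claimed.

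For part~(2), I would write $f(n) = r(n,\mathbf{x}) - n$, so that part~(1) reads ``$f$ is nondecreasing'' and ``$f(n) \le p(n,\mathbf{x})$'', while the condition $r(n+1,\mathbf{x}) = r(n,\mathbf{x})+1$ becomes $f(n+1) = f(n)$. If $\mathbf{x}$ is ultimately periodic, then $p(n,\mathbf{x})$ is bounded by Morse--Hedlund, hence so is $f$; a bounded nondecreasing integer sequence is eventually constant, which is the desired conclusion. Conversely, suppose $f(n) = c$ for all $n \ge N$. For each such $n$ the suffix of length $n$ of $x_1\ldots x_{n+c}$ reappears starting at some position $i_n \in \{1, \ldots, c\}$, which means $x_k = x_{k-(c+1-i_n)}$ for a range of indices $k$ growing with $n$. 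Since $i_n$ takes only finitely many values, one value, hence one shift $p$ with $1 \le p \le c$, occurs for infinitely many $n$; letting $n \to \infty$ along that subsequence yields $x_k = x_{k-p}$ for all large $k$, i.e. $\mathbf{x}$ is ultimately periodic.

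For part~(3) I would argue by contraposition: assuming $m := r(n+1,\mathbf{x}) \le 2n+2$, I aim to show $r(n+1,\mathbf{x}) = r(n,\mathbf{x}) + 1$. The suffix $W = x_{m-n}\ldots x_m$ of length $n+1$ reappears earlier, starting at some $j$ with $1 \le j \le m-n-1$; since $m \le 2n+2$, the two occurrences of $W$ sit in a window of length at most $2(n+1)$ and therefore overlap or abut, so the factor $x_j \ldots x_m$ has a period $d = m-n-j \le n+1$. This period immediately exhibits the suffix of length $n$ of $x_1\ldots x_{m-1}$ as a repeated factor of that prefix, giving $r(n,\mathbf{x}) \le m-1$; it then remains to exclude a repetition of a block of length $n$ inside $x_1\ldots x_{m-2}$, which would be the jump. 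I expect this last step to be the main obstacle. The idea is that such an early repetition would force a second period $e$ on a factor substantially overlapping the periodic tail; the relevant lengths all being $O(n)$ and controlled by $m \le 2n+2$, the theorem of Fine and Wilf would impose the common period $\gcd(d,e)$ on a long factor, and chasing this shorter period back into $x_1 \ldots x_{m-1}$ should produce a repeated block of length $n+1$ strictly before position $m$, contradicting the minimality in the definition of $r(n+1,\mathbf{x})$. The delicate points are the exact length bookkeeping needed to apply Fine and Wilf and the verification that the shorter period genuinely yields a repetition of length $n+1$ rather than merely one of length $n$.
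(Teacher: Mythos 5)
The paper itself offers no proof of this lemma: it is quoted as ``established in \cite{BuKim15a}'', so there is no in-paper argument to compare against and your attempt must be judged on its own. Your parts (1) and (2) are correct and complete: the delete-the-last-letter argument gives $r(n+1,\mathbf{x})\ge r(n,\mathbf{x})+1$, the pigeonhole argument among the factors starting at positions $1,\dots,p(n,\mathbf{x})+1$ gives $r(n,\mathbf{x})\le p(n,\mathbf{x})+n$, and the two directions of (2) (bounded nondecreasing $f(n)=r(n,\mathbf{x})-n$ is eventually constant; a recurring shift $p=c+1-i_n$ propagates to $x_k=x_{k-p}$ for all large $k$) are sound.

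Part (3), however, has a genuine gap, and it is exactly the step you flag yourself. Everything you actually prove there (the period $d=m-n-j$ on $x_j\dots x_m$ and the bound $r(n,\mathbf{x})\le m-1$) is a repackaging of part (1); the whole content of (3) is ruling out $r(n,\mathbf{x})\le m-2$, and for that you only write ``I expect\dots should produce\dots'', so as it stands this is a plan, not a proof. The plan is salvageable, but the contradiction to aim for is not a new repeated block of length $n+1$ before position $m$; it is the failed one-letter extension of the length-$n$ repeat. Concretely: suppose $m:=r(n+1,\mathbf{x})\le 2n+2$ and $m'':=r(n,\mathbf{x})\le m-2$. Then there are $i\ge 1$ and $e\ge 1$ with $m''=i+n+e-1$ such that $x_u=x_{u+e}$ for $i\le u\le m''-e$, and minimality of $r(n+1,\mathbf{x})=m$ applied to the two length-$(n+1)$ blocks ending at $m''$ and $m''+1\le m-1$ forces $x_{i+n}\ne x_{m''+1}$. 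The two periodic zones $[j,m]$ (period $d\le n+1$) and $[i,m'']$ (period $e\le m''-n$) overlap in $[\max(i,j),m'']$; the inequalities $m\le 2n+2$ and $m''\ge n+1$ show this overlap has length at least $d+e-\gcd(d,e)$, so Fine and Wilf gives it period $g=\gcd(d,e)$. The positions $m''+1-d$ and $m''+1-e=i+n$ both lie in the overlap and differ by a multiple of $g$, hence $x_{m''+1-d}=x_{i+n}$; on the other hand the period $d$ of $[j,m]$ gives $x_{m''+1}=x_{m''+1-d}$. Combining, $x_{m''+1}=x_{i+n}$, contradicting the failed extension. This is the bookkeeping your sketch defers, and without it (or something equivalent) part (3) is unproven.
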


We will make use of the following characterisation of quasi-Sturmian words.

\begin{lemma}\label{Cas}
An infinite word ${\mathbf x}$ written over a finite alphabet ${\mathcal A}$ 
is quasi-Sturmian if and only if there are a 
finite word $W$, a Sturmian word $\bs$ defined over $\{0, 1\}$ and a 
morphism $\phi$ from $\{0, 1\}^*$ into ${\mathcal A}^*$ such that 
$\phi (01) \not= \phi (10)$ and 
$$
{\mathbf x} = W \phi (\bs).
$$
\end{lemma}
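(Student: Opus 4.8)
The plan is to prove the two implications separately; the reverse implication is routine and the direct one carries essentially all the difficulty. For the reverse implication, assume ${\mathbf x} = W \phi(\bs)$ with $\bs$ Sturmian and $\phi(01) \ne \phi(10)$. I would first observe that $\phi(01) \ne \phi(10)$ forces both $\phi(0)$ and $\phi(1)$ to be nonempty (otherwise $\phi(01) = \phi(10)$), so that $\phi$ is non-erasing. The prefix $W$ is harmless, since prepending a finite word changes the complexity function only by an eventually constant amount (a standard fact: a finite prefix adds at most $|W|$ factors of each length, and the number of these extra factors is nondecreasing in $n$, hence bounded and eventually constant). It therefore suffices to show $p(n, \phi(\bs)) = n + c$ for large $n$, which I would obtain by encoding each length-$n$ factor of $\phi(\bs)$ by the shortest factor $v$ of $\bs$ whose image $\phi(v)$ contains it, together with the offset of its first letter inside $\phi(v)$. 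The hypothesis $\phi(01) \ne \phi(10)$ makes this encoding essentially injective, because the cutting points between the images of successive letters are recoverable; combined with $p(m, \bs) = m+1$ this yields affine complexity with eventual first difference $1$.

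For the direct implication, assume $p(n, {\mathbf x}) = n + k$ for $n \ge n_0$, so that $p(n+1, {\mathbf x}) - p(n, {\mathbf x}) = 1$ for $n \ge n_0$. First I would record the special-factor structure: for each $n \ge n_0$ there is a unique right-special factor $R_n$, it has exactly two right extensions, and the $R_n$ form an increasing chain of suffixes; symmetrically there is a unique left-special factor. Fixing a sufficiently long right-special factor $R$, I would study its return words in ${\mathbf x}$, namely the blocks read between consecutive occurrences of $R$. The desired decomposition then takes $W$ to be the prefix preceding the first occurrence of $R$, lets $U_0, U_1$ be the two return words, sets $\phi(0) = U_0$ and $\phi(1) = U_1$, and takes $\bs$ to be the return-word coding of ${\mathbf x}$ relative to $R$. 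The non-degeneracy $\phi(01) \ne \phi(10)$ is then immediate: if $U_0 U_1 = U_1 U_0$, then $U_0$ and $U_1$ would be powers of a common word, forcing ${\mathbf x}$ to be eventually periodic and contradicting $p(n, {\mathbf x}) \to +\infty$ together with the Morse--Hedlund bound.

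The step I expect to be the main obstacle is proving that the coding $\bs$ is genuinely Sturmian, which amounts to showing that, for $R$ long enough, (i) there are exactly two return words, so that $\bs$ is a word over $\{0,1\}$, and (ii) $\bs$ has complexity $n+1$. Here I would analyse the Rauzy graph $\Gamma_n$ whose vertices are the factors of length $n$ and whose edges are the factors of length $n+1$: for $n \ge n_0$ it is connected and has one more edge than vertices, and since exactly one vertex (the right-special $R_n$) has out-degree $2$ and exactly one (the left-special factor) has in-degree $2$, the graph is forced into a ``theta'' or ``figure-eight'' shape carrying precisely two simple cycles through $R$. These two cycles are exactly the two first returns to $R$, which gives (i), while transferring the affine complexity of ${\mathbf x}$ through this picture --- equivalently, invoking Vuillon's characterisation of Sturmian words by the two-return-words property together with the aperiodicity of ${\mathbf x}$ --- yields (ii). Making (ii) fully rigorous, that is, checking that the complexity defect of $\bs$ is exactly $1$ and not merely bounded for all sufficiently long $R$, is the delicate point, and it is where I would concentrate most of the effort.
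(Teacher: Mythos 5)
You should first know how the paper itself handles this lemma: its entire ``proof'' is the citation \emph{See \cite{Cassa98}} --- this is Cassaigne's theorem on sequences with grouped factors, a genuinely nontrivial result that the paper imports wholesale. So your proposal is not competing with an argument in the paper but with Cassaigne's own proof. Your roadmap does follow the recognised lines of attack (return words and Rauzy graphs for the direct implication, synchronisation of morphic images for the converse), and several ingredients are sound: the stability of complexity under a finite prefix, the two deterministic first-return walks from the right-special vertex $R_n$ (every other vertex has out-degree one, so leaving $R_n$ by either edge forces the walk until it returns), and the deduction of $\phi(01)\neq\phi(10)$ from aperiodicity. But at the two places where the theorem's actual difficulty sits, the proposal asserts rather than proves, and one of the asserted shortcuts is circular.

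Concretely. (a) In the direct implication, your step (ii) --- that the return-word coding $\mathbf{s}$ is genuinely Sturmian --- cannot be obtained by ``invoking Vuillon's characterisation'' in the way you describe. Vuillon's theorem says a recurrent word is Sturmian if and only if \emph{each of its factors} has exactly two return words; applied to $\mathbf{s}$, it requires controlling the return words of arbitrary factors of $\mathbf{s}$, which correspond to factors of $\mathbf{x}$ of the form $\phi(v)R$. These are in general \emph{not} right-special, and for a non-special vertex $F$ the shape of a single Rauzy graph does not bound its return words by two: the graph admits first-return walks to $F$ of the form (branch $2$)$^k$(branch $1$) for every $k\geq 0$, and excluding all but finitely many of these requires knowing how $\mathbf{x}$ actually travels the graph --- which is precisely the Sturmian property of $\mathbf{s}$ you are trying to establish. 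You candidly flag this as ``the delicate point'' where you would concentrate your effort; that is exactly the point at which the proof is missing, so what you have is an honest roadmap, not a proof. (b) In the converse, the claim that $\phi(01)\neq\phi(10)$ makes cutting points ``recoverable'' and the encoding ``essentially injective'' is false as stated: take $\phi(0)=a$, $\phi(1)=aab$, which satisfies $\phi(01)\neq\phi(10)$, yet factors such as $aa$ admit several parsings, and in general ambiguity persists for factors of every length. Non-commutation only yields aperiodicity of $\phi(\mathbf{s})$; the real content of this direction is that the overcounting caused by multiple interpretations is \emph{eventually constant}, and without that the naive count of pairs (factor of $\mathbf{s}$, offset) only bounds the complexity slope by $(|\phi(0)|+|\phi(1)|)/\min(|\phi(0)|,|\phi(1)|)\geq 2$, not by $1$. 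Finally, a smaller but genuine omission in both directions: a quasi-Sturmian word need not be recurrent (because of the prefix $W$), so the return-word machinery and Vuillon's theorem must be applied to a suitable tail of $\mathbf{x}$, after checking that the chosen right-special factor occurs infinitely often.
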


\begin{proof}
See \cite{Cassa98}.
\end{proof}

Throughout this paper, for a finite word $W$ and an integer $t$, we write $W^t$ 
for the concatenation of $t$ copies of $W$ and $W^{\infty}$ 
for the concatenation of infinitely many copies of $W$. 
We denote by $|W|$ the length of $W$, that is, the number of letters composing $W$. 
A word $U$ is called periodic if $U = W^t$ for some finite word $W$ and an integer $t \ge 2$.
If $U$ is periodic, then the period of $U$ is defined as the length of the shortest 
word $W$ for which there exists an integer $t \ge 2$ such that $U = W^t$.

\begin{lemma}\label{repsturm}
Let $\mathbf s$ be a quasi-Sturmian word and $W$ be a factor of $\mathbf s$.
Then, there exists a positive integer $t$ such that the word $W^t$ is not a factor 
of $\mathbf s$.
\end{lemma}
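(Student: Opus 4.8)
The plan is to reduce to the Sturmian case via Lemma~\ref{Cas} and then play off uniform recurrence against the Morse--Hedlund bound recalled in the introduction. First I would apply Lemma~\ref{Cas} to the quasi-Sturmian word $\mathbf{s}$, writing $\mathbf{s} = U \phi(\mathbf{v})$, where $U$ is a finite word, $\mathbf{v}$ is a Sturmian word over $\{0,1\}$, and $\phi$ is a morphism with $\phi(01) \neq \phi(10)$. Set $\mathbf{y} := \phi(\mathbf{v})$. I record two properties of $\mathbf{y}$. Since $\mathbf{v}$ is Sturmian it is uniformly recurrent, and because $\phi$ is non-erasing this is inherited by $\mathbf{y}$: for every $m$ there is an integer $R(m)$ such that every factor of $\mathbf{y}$ of length $m$ occurs inside every factor of $\mathbf{y}$ of length $R(m)$. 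Moreover $\mathbf{y}$ is aperiodic, being obtained from the aperiodic word $\mathbf{s}$ by deleting the finite prefix $U$; hence, by Morse--Hedlund, $p(m,\mathbf{y}) \ge m+1$ for every $m \ge 1$.

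Next I would argue by contradiction: suppose that $W^{t}$ is a factor of $\mathbf{s}$ for every positive integer $t$, and set $\ell = |W|$. As $U$ is finite, for large $t$ an occurrence of $W^{t}$ in $\mathbf{s} = U\mathbf{y}$ meets $U$ in at most its first $|U|$ letters, so $\mathbf{y}$ itself contains $W^{t'}$ for arbitrarily large $t'$. Fix such a $t'$ with $t'\ell \ge R(\ell)$ and examine the factor $W^{t'}$ of $\mathbf{y}$. On the one hand, $W^{t'}$ has period $\ell$, so it displays at most $\ell$ distinct factors of length $\ell$. On the other hand, $W^{t'}$ is a factor of $\mathbf{y}$ of length at least $R(\ell)$, so by uniform recurrence it contains \emph{every} factor of $\mathbf{y}$ of length $\ell$. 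Combining the two bounds yields $p(\ell,\mathbf{y}) \le \ell$.

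This contradicts $p(\ell,\mathbf{y}) \ge \ell+1$, and the contradiction proves the lemma: some power $W^{t}$ fails to be a factor of $\mathbf{s}$. The one point requiring care --- and the only genuine obstacle --- is the transfer of uniform recurrence from $\mathbf{v}$ to $\mathbf{y} = \phi(\mathbf{v})$, that is, the existence of the window length $R(\ell)$. I expect this to be routine: any factor of $\mathbf{y}$ of length $\ell$ is contained in $\phi(G)$ for some factor $G$ of $\mathbf{v}$ of bounded length, such a $G$ recurs in $\mathbf{v}$ with bounded gaps by uniform recurrence of Sturmian words, and applying the non-erasing morphism $\phi$ converts these into occurrences of the given factor with gaps bounded in terms of $\ell$ and $\max(|\phi(0)|,|\phi(1)|)$. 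Aperiodicity of $\mathbf{y}$ is equally harmless, since deleting a finite prefix does not affect whether a word is ultimately periodic.
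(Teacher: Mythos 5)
Your proof is correct, but it follows a genuinely different route from the paper's. The paper argues directly on the quasi-Sturmian word $\mathbf s$ using special factors: since $p(n+1,\mathbf s)=p(n,\mathbf s)+1$ for $n\ge N$, there is exactly one right special and one left special factor of each large length; assuming every power $W^t$ occurs, the paper extracts from blocks $VW^uV'$ two words $U$, $U'$ that are simultaneously right and left special, and the uniqueness forces two distinct right special factors of the same length, a contradiction. You instead invoke Lemma~\ref{Cas} to write $\mathbf s = U\phi(\mathbf v)$ with $\mathbf v$ Sturmian, transfer uniform recurrence of $\mathbf v$ (a classical fact going back to Morse and Hedlund) through the morphism $\phi$, and then play a long power $W^{t'}$ --- which, having period $|W|$, contains at most $|W|$ distinct factors of length $|W|$ --- against uniform recurrence to force $p(|W|,\phi(\mathbf v))\le |W|$, contradicting the Morse--Hedlund lower bound. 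The steps you flag as routine are indeed routine: $\phi(01)\neq\phi(10)$ does imply $\phi$ is non-erasing (an erasing $\phi$ would give $\phi(01)=\phi(10)$), the truncation past the prefix $U$ still leaves powers $W^{t'}$ with $t'\ge t-\lceil |U|/|W|\rceil-1$, and deleting a finite prefix preserves aperiodicity. The trade-off is clear: your argument is shorter and more conceptual, but it imports the uniform recurrence of Sturmian words as an external ingredient, whereas the paper's special-factor argument is self-contained, relying only on the complexity hypothesis and the strict monotonicity of $n\mapsto p(n,\mathbf s)$.
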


This result is certainly well-known.  
For the sake of completeness, we provide its proof.    
A factor $U$ is called a right (resp. left) special word of an infinite word $\mathbf x$ 
if there are two distinct letters $a,b$ such that $Ua$, $Ub$ 
(resp. $aU$, $bU$) are both factors of $\mathbf x$.  
If $V$ occurs infinitely many times in an
infinite word $\mathbf x$ which is not ultimately periodic, then there are 
a right special word $U_1$ and a left special word $U_2$  
such that $V$ is a prefix of $U_1$ and a suffix of $U_2$.

\begin{proof} 
Let $\mathbf s$ be a quasi-Sturmian word and $W$ 
a factor of $\mathbf s$ such that $W^t$ is a factor of $\mathbf s$ for any positive integer $t$. 
Since $n \mapsto p(n,\mathbf s)$ is strictly increasing \cite{MoHe}, 
there is an integer $N$ such that $p(n+1,\mathbf s) = p(n,\mathbf s) +1$ for all $n \ge N$.   
Thus, for every $n \ge N$, the word $\mathbf s$ has only one
right (resp. left) special word of length $n$.  
Since $\mathbf s$ is not ultimatly periodic,    
there exist infinitely many integers $u$ 
such that $V W^u V'$ is a factor of $\mathbf s$ for some $V, V' \ne W$ with $|V| = |V'| =|W|$.  
Consequently, there exist words $U, U'$ with $|U'| > |U| \ge \max( N , |W|)$ and 
letters $a,b,c,d, a',b',c',d'$ such that 
$$
aUb, a'U'b' \quad \hbox{are factors of $W^\infty$}, 
\quad  c U d, c' U' d' \quad \hbox{are factors of $\mathbf s$}, 
$$
$$
\hbox{but $a \ne c, b\ne d, a' \ne c', b'\ne d'$.}
$$  
Since $p(n, W^\infty) \le |W|$ for every positive integer $n$, 
the condition $|U'| >|U| \ge |W|$ implies that $cU$, $c'U'$, $Ud$, $U'd'$ 
are not factors of $W^\infty$.   
Furthermore, our assumption that 
$W^t$ is a factor of $\mathbf s$ for any positive integer $t$ implies that 
the words $aUb$ and $a'U'b'$ are factors of $\mathbf s$. 
Thus the words $U$ and $U'$ are right special and left special words of $\mathbf s$.   
Since the right (resp. left) special word of $\mathbf s$ of length $|U|$ 
is unique, we deduce that
$U$ is a prefix and a suffix of $U'$ and we infer that $a= a'$, $b=b'$, $c=c'$, $d=d'$.
Therefore, $cUb$ is a prefix of $c'U'$ and $aUd$ is a suffix of $U'd'$,
which implies that $cU$ and $aU$ are two distinct right special words of
$\mathbf s$ of length greater than $N$, a contradiction. 
\end{proof}

%%%%%%%%%%%%%%%%%%%%%%%%%%%%%%%%%%%%%%%%%%%%%%

\section{Rational approximation to quasi-Sturmian numbers}

To establish Theorem \ref{twobases} we need a precise description of the convergents 
to quasi-Sturmian numbers. 
The first assertion (and even a stronger version) of the 
next theorem has been proved in \cite{BuKim15a}.

\begin{theorem}\label{contfracst}
Let $\xi$ be a real number whose $b$-ary expansion is a quasi-Sturmian word.
There exist infinitely many rational numbers $\frac{p}{q}$ with $q \ge 1$ such that
\begin{equation}\label{eq3.1}
\Bigl| \xi - \frac{p}{q} \Bigr| < \frac{1}{q^{5/2}}.      
\end{equation}
Furthermore, there exists an integer $M$, depending only on $\xi$ and $b$, such that, for every
reduced rational number $\frac{p}{q}$ satisfying 
$$
\Bigl| \xi - \frac{p}{q} \Bigr| < \frac{1}{M q^2},
$$
with $q$ sufficiently large, there exist integers $r \ge 0$, $s \ge 1$ and $m$ with 
$1 \le m \le M$ and $q = \frac{b^r (b^s - 1)}{m}$. 
\end{theorem}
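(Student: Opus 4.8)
The plan is to exploit the combinatorial structure of quasi-Sturmian words provided by Lemma~\ref{Cas}, which writes the $b$-ary expansion of $\xi$ as $W\phi(\mathbf{s})$ for a Sturmian word $\mathbf{s}$ and a non-degenerate morphism $\phi$. A Sturmian word is generated by a rotation, and its factor structure is governed by the continued fraction expansion of its slope $\alpha$. The key point is that the initial block structure of a Sturmian word consists of long repetitions dictated by the partial quotients of $\alpha$: the prefixes of $\mathbf{s}$ exhibit almost-periodicity with periods equal to the denominators $q_k$ of the convergents to $\alpha$, and these repetitions persist over a length roughly $a_{k+1}q_k$. After applying the morphism $\phi$ and prepending $W$, these periodic patterns in $\mathbf{s}$ become (nearly) periodic patterns in the base-$b$ expansion of $\xi$, with periods of the form $b^s-1$ scaled by a shift $b^r$. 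I would first carry out this translation carefully, recording how a periodic factor of length $\ell$ with period $t$ in the digit string of $\xi$ forces a rational approximation: if the digits of $\xi$ are (nearly) periodic with period $t$ over a long initial run, then $\xi$ is close to a rational with denominator essentially $b^r(b^t-1)$.

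For the first assertion I would make this quantitative. A block of digits that is periodic with period $t$ starting at position $r+1$ and extending for length $\ell$ means $\xi$ agrees to high precision with the rational number whose expansion is $W' \overline{V}$ (eventually periodic), and the error is controlled by $b^{-(r+\ell)}$ while the denominator is $q \approx b^r(b^t-1)$. The ratio of partial quotients in a Sturmian slope guarantees that infinitely often $\ell$ is substantially larger than $r+t$, which upgrades the trivial exponent-$2$ approximation to exponent $5/2$, yielding infinitely many $\frac{p}{q}$ with $\bigl|\xi-\frac{p}{q}\bigr|<q^{-5/2}$ as in \eqref{eq3.1}. Here the quasi-Sturmian (rather than strictly Sturmian) case requires tracking the morphism length $|\phi|$ and the prefix $W$, which only perturbs constants; the essential mechanism is the same.

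The second assertion is the delicate converse: I must show that \emph{any} good rational approximation, with denominator $q$ and $\bigl|\xi-\frac{p}{q}\bigr|<\frac{1}{Mq^2}$, \emph{must} arise from such a periodic block and hence have the prescribed shape $q=\frac{b^r(b^s-1)}{m}$. The strategy is to use the function $r(n,\mathbf{x})$ from Lemma~\ref{ubound}: a very good rational approximation to $\xi$ forces a long (nearly) periodic run in its base-$b$ expansion, because a rational $\frac{p}{q}$ with small denominator and excellent approximation has an eventually periodic expansion that must coincide with that of $\xi$ over a run whose length dominates $\log q$. I would combine the bound $r(n,\mathbf{x})\le p(n,\mathbf{x})+n$ with the quasi-Sturmian growth $p(n,\mathbf{x})=n+k$ to show that repeated factors of the Sturmian skeleton $\mathbf{s}$ can only occur with periods drawn from the convergent denominators $q_k$ of $\alpha$; after applying $\phi$ these become periods of the controlled form $b^s-1$ (up to the bounded factor $m\le M$). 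The main obstacle is precisely this rigidity step: proving that no \emph{spurious} good approximation exists outside the family $b^r(b^s-1)/m$. This amounts to showing that a long periodic run in the digit expansion, forced by the approximation, cannot have a period incompatible with the Sturmian return-word structure, which is where Lemma~\ref{ubound}(3)---the dichotomy $r(n+1,\mathbf{x})\ge 2n+3$ whenever $r$ jumps by at least $2$---does the decisive work by ruling out short accidental periods and pinning the admissible periods to the arithmetic form governed by the base $b$.
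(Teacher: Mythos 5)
Your outline for the first assertion is broadly consistent with the mechanism of \cite{BuKim15a}, to which the paper simply defers for that part (repetitions in prefixes of a quasi-Sturmian word yield rationals with denominators of the shape $b^r(b^t-1)$ approximating $\xi$ to order better than $2$). The genuine gap is in your argument for the second assertion, which is the heart of the theorem, and it occurs at the very first step. You claim that a rational $\frac{p}{q}$ with $\bigl| \xi - \frac{p}{q} \bigr| < \frac{1}{Mq^2}$ forces a long (nearly) periodic run in the $b$-ary expansion of $\xi$, because the expansions of $\xi$ and of $\frac{p}{q}$ agree over a window whose length dominates $\log q$. The window indeed has length about $2\log_b q$, but the eventual period of the $b$-ary expansion of $\frac{p}{q}$ is the multiplicative order of $b$ modulo the prime-to-$b$ part of $q$, which can be as large as $q-1$, i.e., exponentially larger than the window. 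In that case no periodicity whatsoever becomes visible in the digits of $\xi$, and your mechanism produces nothing. Worse, the denominators for which the period plus pre-period stays comparable to $\log_b q$ are exactly those of the form $\frac{b^r(b^s-1)}{m}$ with $m$ bounded, so excluding the bad case is precisely the statement you are trying to prove: the argument is circular.

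What is missing is control of \emph{all} convergents of $\xi$, and this is where the paper's proof goes a different way. It never tries to extract periodicity from a hypothetical good approximation; instead it shows that the ``special'' convergents produced by the combinatorics (the indices $n_k$ with $r(n_k,\mathbf{x}) < 2n_k$, converted into convergents via Theorem 1.5.2 of \cite{AlSh03} together with Legendre's theorem) occur so densely in the continued fraction expansion of $\xi$ that all partial quotients lying between two consecutive special convergents are bounded by $2b^{2(\rho+1)}$. Consequently, by Legendre's theorem again, any approximation of quality $\frac{1}{Mq^2}$ with $M = 2(b^{2(\rho+1)}+1)$ must itself be one of the special convergents, whose denominators have the required shape up to a bounded factor. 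The density step (consecutive special indices differ by at most $2$) rests on Lemma \ref{repsturm}, i.e., on the fact that a quasi-Sturmian word contains no arbitrarily high power of any factor; Lemma \ref{ubound}(3), to which you assign ``the decisive work,'' only serves to set up the sequence $(n_k)$ and the identity for $r(n_k+\ell,\mathbf{x})$. Without some substitute for this convergent-by-convergent control of $\xi$ (note that you never invoke Legendre's theorem or the continued fraction of $\xi$ at all), the claim that no spurious good approximation exists cannot be deduced from the combinatorics alone.
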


\begin{proof}
For the proof of the second assertion, 
we assume that the reader is familiar with the theory of continued fractions
(see e.g. Section 1.2 of \cite{BuLiv}).  

We may assume that $\lfloor \xi \rfloor = 0$
and write  
$$
\xi = \sum_{k \ge 1} \, \frac{x_k}{b^k} = [0; a_1, a_2, \ldots].
$$ 
Let $(\frac{p_j}{q_j})_{j \ge 1}$ denote the sequence of convergents to $\xi$. 
 
Since $\xi$ is irrational, it follows from Lemma~\ref{ubound} (2) that 
the increasing sequence ${\mathcal N} := (n_k)_{k \ge 1}$ 
of all the integers $n$ such that $r(n+1, {\bf x}) \ge r(n, {\bf x}) + 2$ is an infinite sequence. 

Since ${\bf x}$ is a quasi-Sturmian sequence, there exist integers $n_0$ and $\rho$ 
such that 
$$
p(n, {\bf x}) \le n + \rho, \quad \hbox{for $n \ge 1$,     
with equality for $n \ge n_0$}.
$$
By Lemma~\ref{ubound} (1),   
we deduce that $r(n, {\bf x}) \le 2 n + \rho$ for $n \ge 1$. 

Let $k$ be a positive integer. 
By Lemma~\ref{ubound} (3),   
we have $r(n_k +1, {\bf x}) \ge 2 n_k + 3$.  
Define $\rho_k$ by $r(n_k +1, {\bf x}) = 2 n_k + \rho_k +1$ 
and observe that $2 \le \rho_k \le \rho+1$.
We deduce from the definition of the sequence ${\mathcal N}$ that 
\begin{equation}\label{new}
r(n_k + \ell, {\bf x}) = 2 n_k + \rho_k + \ell, \quad 1 \le \ell \le n_{k+1} - n_k.
\end{equation}

Set $\alpha_k = \frac{r(n_k, {\bf x})}{n_k}$ and observe that $\alpha_k \le 2 + \frac{\rho}{n_k}$.  
It follows from the choice of $n_k$ and Lemma~\ref{ubound} (1) that 
$$
\liminf_{k \to + \infty} \alpha_k = \liminf_{n \to + \infty} \frac{r(n, {\bf x})}{n}, 
$$ 
which is shown to be less than 2 in \cite{BuKim15a}.  
Consequently, there are infinitely many $k$ such that $\alpha_k < 2$. 
Let $k$ be an integer with this property.

By Theorem 1.5.2 of \cite{AlSh03}, the fact that  
$r(n_k, {\bf x}) = \alpha_k n_k < 2 n_k$ implies that there are finite words $W_k, U_k, V_k$ 
of lengths $w_k, u_k, v_k$, respectively, and a positive integer $t$ such that   
$W_k (U_k V_k)^{t+1} U_k$ is a prefix of ${\bf x}$ of length $\alpha_k n_k$ and 
$$
t (u_k + v_k) + u_k = n_k.
$$
Thus, there exists an integer $r_k$ such that 
the $\alpha_k n_k$ first digits of ${\bf x}$ and those 
of the $b$-ary expansion $W_k (U_k V_k)^{\infty}$ 
of the rational number $\frac{r_k}{b^{w_k} (b^{u_k + v_k} - 1)}$ coincide.
Consequently, we get
$$
\Bigl| \xi - \frac{r_k}{b^{w_k} (b^{u_k + v_k} - 1)} \Bigr| \le 
\frac{1}{b^{\alpha_k n_k}}.
$$
Also, since $u_k + v_k + w_k = (\alpha_k - 1) n_k$, we have
$$
b^{w_k} (b^{u_k + v_k} - 1) \le b^{(\alpha_k - 1) n_k}. 
$$
A classical theorem of Legendre (see e.g. Theorem 1.8 of \cite{BuLiv}) 
asserts that, if the irrational real number $\zeta$ 
and the rational number $\frac{p}{q}$ with $q \ge 1$ 
satisfy $|\zeta - \frac{p}{q}| \le \frac{1}{2 q^2}$,
then $\frac{p}{q}$ is a convergent of the continued fraction expansion of $\zeta$.

As
$$
2 \bigl( b^{w_k} (b^{u_k + v_k} - 1) \bigr)^2 \le 
2 b^{2 (\alpha_k - 1) n_k} \le b^{\alpha_k n_k}
$$
holds if $\alpha_k n_k \le 2 n_k - 1$,  
Legendre's theorem and the assumption $\alpha_k < 2$ imply that 
the rational number $\frac{r_k}{b^{w_k} (b^{u_k + v_k} - 1)}$, which may not be 
written under its reduced form,
is a convergent, say $\frac{p_h}{q_h}$, of the continued fraction expansion of $\xi$.

Let $\ell$ be the smallest positive integer such that $\alpha_{k + \ell} < 2$.

We first establish that $\ell \le 2$ if $n_k$ is sufficiently large.

Assume that $r(n_{k+1}, {\mathbf x}) \ge 2 n_{k+1}$ and $r(n_{k+2}, {\mathbf x}) \ge 2 n_{k+2}$.  
Since  
\begin{equation}\label{eq3.2} 
\begin{split}
r(n_{k+2}, {\bf x}) 
& = r(n_{k+1} + (n_{k+2} - n_{k+1}), {\bf x}) \\ 
& = 2n_{k+1} + \rho_{k+1} + n_{k+2} - n_{k+1}  \\
& = n_{k+2}  + n_{k+1} + \rho_{k+1}
\end{split}
\end{equation}
by \eqref{new}, 
we get
$n_{k+2} - n_{k+1} \le \rho_{k+1}$.  
Put $\eta_k := r(n_{k+2}, {\mathbf x}) - r(n_{k+1}, {\mathbf x})$. 
Then, it follows from \eqref{eq3.2} that 
$$
\eta_k \le n_{k+2} + n_{k+1} + \rho_{k+1} - 2 n_{k+1}
=  n_{k+2} - n_{k+1} + \rho_{k+1} \le 2 \rho_{k+1},  
$$
thus $\eta_k \in \{1, 2, \ldots , 2 \rho + 2\}$. 

Since, for any 
sufficiently large $n \ge 1$, we have $p(n+1, {\bf x}) = p(n, {\bf x}) + 1$, there 
exists a unique factor $Z_n$ of ${\bf x}$ of length $n$ such that $Z_n$ is the prefix of  
exactly two distinct factors of ${\bf x}$ of length $n+1$.

It follows from our assumption $r(n_{k+1} + 1, {\mathbf x}) > r(n_{k+1}, {\mathbf x}) + 1$ 
that 
$$
Z_{n_{k+1}} = {x}_{r(n_{k+1}, {\mathbf x})-n_{k+1}+1} \dots x_{r(n_{k+1}, {\mathbf x})}.  
$$
Likewise, we get 
$$
Z_{n_{k+2}} = {x}_{r(n_{k+2}, {\mathbf x})-n_{k+2}+1} \dots x_{r(n_{k+2}, {\mathbf x})}   
$$
and, since $Z_{n_{k+1}}$ is a suffix of $Z_{n_{k+2}}$, we get  
\begin{align*}    
Z_{n_{k+1}} &= { x}_{r(n_{k+1}, {\mathbf x})-n_{k+1}+1}\dots x_{r(n_{k+1}, {\mathbf x})}    
= { x}_{r(n_{k+2}, {\mathbf x})-n_{k+1}+1}\dots x_{r(n_{k+2}, {\mathbf x})}   \\
&= { x}_{r(n_{k+1}, {\mathbf x}) + \eta_k -n_{k+1}+1}\dots x_{r(n_{k+1}, {\mathbf x}) + \eta_k}.   
\end{align*}  
It then follows from Theorem 1.5.2 of \cite{AlSh03} that there exists an integer $t_k$, a
word $T_k$ of length $\eta_k$ and a prefix $T'_k$ of $T_k$ such that
$$
Z_{n_{k+1}} = (T_k)^{t_k} T'_k.   
$$
We deduce that
$$
t_k \ge \frac{n_{k+1} - \eta_k +1}{\eta_k} \ge  \frac{n_{k+1} - 2 \rho - 1}{2 (\rho + 1)}.   
$$

By Lemma \ref{repsturm}, there exists an integer $t$ such that, for every factor $W$ 
of ${\bf x}$ of length at most $2 \rho + 2$, the word $W^t$ is not a factor of ${\bf x}$. 
We conclude that $n_{k+1}$, 
hence $k$, must be bounded.  

Consequently, if $k$ is sufficiently large, then we cannot have simultaneously 
$r(n_{k+1}, {\mathbf x}) \ge 2 n_{k+1}$ and $r(n_{k+2}, {\mathbf x}) \ge 2 n_{k+2}$.  
This implies that $\ell = 1$ or $\ell = 2$.

Since $\alpha_{k + \ell} < 2$, it follows from Legendre's theorem that the rational number 
$\frac{r_{k+\ell}}{b^{w_{k+\ell}} (b^{u_{k+\ell} + v_{k+\ell}} - 1)}$, 
defined analogously as the rational number $\frac{r_k}{b^{w_k} (b^{u_k + v_k} - 1)}$
and which may not be 
written under its reduced form, is a convergent, say $\frac{p_j}{q_j}$, 
of the continued fraction expansion of $\xi$.  

Here, the indices $h$ and $j$ depend on $k$.   
We have 
$$ 
q_h \le b^{w_k} (b^{u_k + v_k} - 1) \le b^{(\alpha_k-1)n_k}, \quad 
q_j \le b^{w_{k+\ell}} (b^{u_{k+\ell} + v_{k+\ell}} - 1) \le b^{(\alpha_{k+\ell}-1)n_{k+\ell}}.    
$$ 
Note that it follows from \eqref{eq3.2} that
 $$
(\alpha_{k+2} - 1) n_{k+2} = r(n_{k+2}, {\bf x})  - n_{k+2} \le n_{k+1} + \rho + 1,  
$$
and, likewise,
$$
(\alpha_{k+1} - 1) n_{k+1} \le n_k + \rho + 1.
$$
In particular, we have $n_{k+1} \le n_k + \rho + 1$ if $\alpha_{k+1} \ge 2$.  

The properties of continued fractions give that
$$
\frac{1}{2 q_h q_{h+1}} < \Bigl| \xi - \frac{p_h}{q_h} \Bigr| < \frac{1}{q_h q_{h+1}}, \quad
\frac{1}{2 q_j q_{j+1}} < \Bigl| \xi - \frac{p_j}{q_j} \Bigr| < \frac{1}{q_j q_{j+1}}.
$$
This implies that  
$$
q_{j+1} > \frac{b^{\alpha_{k+\ell} n_{k+\ell}}}{2 q_j} \ge \frac{b^{n_{k+\ell}}}{2}.
$$
Since $\alpha_k < 2$, we get
$$
q_h \le b^{(\alpha_{k} - 1) n_{k}} <  b^{n_k}  \le \frac{b^{n_{k+\ell}}}{2} < q_{j+1}. 
$$
Combined with $\frac{p_h}{q_h} \not=  \frac{p_j}{q_j}$,
this gives 
$$
q_h < q_{h+1} \le q_j < q_{j+1}.
$$

It follows from
$$
q_{h} > \frac{b^{\alpha_k n_k}}{2 q_{h+1}}
$$
and
$$
q_{h+1} \le q_j \le b^{(\alpha_{k+\ell} - 1) n_{k+\ell}} \le b^{n_k + 2(\rho +1)},
$$
that 
$$
q_h >  \frac{b^{\alpha_k n_k}}{2 b^{n_k + 2(\rho +1)}} = \frac{b^{(\alpha_k - 1) n_k}}{2 b^{2(\rho +1)}}. 
$$
Since $q_h \le b^{w_k} (b^{u_k + v_k} - 1) \le b^{(\alpha_{k} - 1) n_{k}}$,  
this shows that the rational number 
$\frac{r_k}{b^{w_k} (b^{u_k + v_k} - 1)}$
is not far from being reduced, 
in the sense that the greatest common divisor of its numerator and denominator 
is at most equal to $2 b^{2(\rho +1)}$. 
Furthermore, it follows from
$$
q_{h+1} > \frac{b^{\alpha_k n_k}}{2 q_h} \ge \frac{b^{n_k}}{2}
$$
that
$$
1 \le \frac{q_j}{q_{h+1}} \le 2 b^{2(\rho +1)}.
$$
Consequently, all the partial quotients $a_{h+2}, \ldots , a_j$  are less than $2 b^{2(\rho +1)}$
and we get 
$$
\Bigl| \xi - \frac{p_\ell}{q_\ell} \Bigr| > \frac{1}{(a_{\ell + 1} + 2) q_{\ell}^2} \ge
\frac{1}{2 (b^{2(\rho +1)} + 1) q_{\ell}^2}, \quad
\hbox{for $\ell = h+1, \ldots , j-1$}.
$$
Consequently, the second assertion of the theorem holds with 
the value $M = 2 (b^{2(\rho +1)} + 1)$. 
\end{proof}

\medskip

\noindent {\it Proof of Theorem \ref{twobases}.}

\medskip

Let $\xi$ be a real number whose $r$-ary expansion and whose 
$s$-ary expansions are quasi-Sturmian words. 
Since \eqref{eq3.1} has infinitely many solutions and 
by unicity of the continued fraction expansion of an irrational real number,  
we deduce from Theorem \ref{contfracst} that there are positive integers $m_1, m_2$ 
and infinitely many quadruples $(u_1, v_1, u_2, v_2)$ of non-negative integers 
with $v_1 v_2 \not= 0$ such that
$$
\frac{r^{u_1} (r^{v_1} - 1)}{m_1} = \frac{s^{u_2} (s^{v_2} - 1)}{m_2}.
$$
In particular, the equation
\begin{equation}\label{eq3.3}
\frac{m_2}{m_1} r^{z_1} s^{-z_4} - \frac{m_2}{m_1}  r^{z_2} s^{-z_4} + s^{z_3} = 1  
\end{equation}
has infinitely many solutions $(z_1, \ldots , z_4)$ in non-negative integers with $z_3 \ge 1$. 

This is a linear equation in variables which lie in a 
multiplicative group generated by $r$ and $s$. 
By Theorem 1.1 of \cite{ESS}, Equation \eqref{eq3.3} has only finitely many non-degenerate 
solutions. 
Since $r$ and $s$ are multiplicatively independent, the equation 
$\frac{m_2}{m_1}  r^{z_2} s^{-z_4} =  s^{z_3}$  
has only finitely many solutions. 
Consequently, Equation \eqref{eq3.3} has only finitely many solutions, a contradiction to our
assumption. Consequently, the $r$-ary and the $s$-ary expansions of $\xi$ cannot 
both be quasi-Sturmian.

\section*{Acknowledgement}
The authors are grateful to the referee for a very careful reading. 

\nocite{*}
\bibliographystyle{cdraifplain}
%\bibliography{xampl}

\end{document}